\documentclass{amsart}
\usepackage[margin=1.5cm]{geometry}
\numberwithin{equation}{section}
\usepackage{amssymb}
\usepackage{amsmath, amsfonts,amsthm,amssymb,amscd, verbatim,graphicx,color,multirow,booktabs, caption,tikz,tikz-cd, mathdots,bm}
\usepackage{tikz-cd}
 \usepackage{hyperref}
\usetikzlibrary{positioning}

\newtheorem{theorem}{Theorem}[section]
\newtheorem{lemma}[theorem]{Lemma}

\newtheorem{proposition}[theorem]{Proposition}

      \theoremstyle{definition}

     \theoremstyle{remark}
     \newtheorem{remark}[theorem]{Remark}

\newcommand{\Sym}{\mathop{\mathrm{Sym}}}
\newcommand{\Alt}{\mathop{\mathrm{Alt}}}

 \definecolor{mycolor}{rgb}{0.55,0.0,0.16}
  \definecolor{myred}{rgb}{0.6,0.0,0.16}
  \definecolor{mygreen}{rgb}{0.0,0.6,0.16}
  \definecolor{myviolet}{rgb}{1,0,1}

\hypersetup{
colorlinks=false,
allbordercolors=myred,
citebordercolor=mygreen
}

\makeatletter
\@namedef{subjclassname@2020}{%
  \textup{2020} Mathematics Subject Classification}
\makeatother

\begin{document}
\title[Abelian sections of the symmetric groups]{Abelian sections of the symmetric groups\\with respect to their index}
\author[L. Sabatini]{Luca Sabatini}
\address{Luca Sabatini, Dipartimento di Matematica  e Informatica ``Ulisse Dini'',\newline
 University of Firenze, Viale Morgagni 67/a, 50134 Firenze, Italy} 
\email{luca.sabatini@unifi.it}
\subjclass[2020]{primary 20B30, 20B35, 20F69}
\keywords{Symmetric groups; abelian quotients}        
	\maketitle

        \begin{abstract}
We show the existence of an absolute constant $\alpha>0$ such that, for every $k \geq 3$, $G:= \Sym(k)$,
        and for every $H \leqslant G$ of index at least $3$,
       one has $|H/[H,H]| \leq |G:H|^{\alpha/ \log \log |G:H|}$.
        This inequality is the best possible for the symmetric groups,
        and we conjecture that it is the best possible for every family of arbitrarily large finite groups.
\end{abstract}

\vspace{0.1cm}
\section{Introduction}

Abelian quotients of permutation groups attracted attention for the first time in \cite{1989KP},
where the authors show that an abelian section of $\Sym(k)$ has order at most $3^{k/3}$, for every $k \geq 3$.
Better bounds hold for primitive groups \cite{AG}, and for transitive groups \cite{2020LSS}.
In these notes, a different aspect concerning the subgroups of the symmetric groups is revealed:
as the index increases, the abelian quotients grow as slowly as possible.
 
     \begin{theorem} \label{thMain}
     There exists an absolute constant $\alpha >0$ such that for every $k \geq 1$
     and every $H \leqslant G := \Sym(k)$ of index at least $3$
     one has
      $$ |H/H'| \> \leq \> |G:H|^{\> \alpha / \log \log |G:H|} , $$
      where $H':=[H,H]$ denotes the commutator subgroup of $H$.
     \end{theorem}
     \vspace{0.1cm}
    
     This bound is sharp in a number of situations, which make the proof by induction somewhat challenging.
     For example, equality is satisfied for infinitely many $k$,
     by elementary abelian groups of order $p^{\lfloor k/p \rfloor}$ having all of their orbits of cardinality $p$.
     More is said in Section \ref{sectFinal}, where we show some evidences towards the fact that Theorem \ref{thMain}
     is the best possible for every family of arbitrarily large finite groups.

   \vspace{0.1cm}
   \section{Preliminaries} \label{Sect2}
   
Unless explicitly stated otherwise, all the logarithms are to base $2$, and $\exp(x):=2^x$.
We will often use inequalities for the factorial function: to avoid useless calculations,
we recall three estimates, which provide increasing accuracy.
   
    \begin{lemma}[Factorial estimates] \label{3lemFactorial}
     Let $k \geq 1$. Then
     
     \begin{itemize}
     \item[(i)]
     $$ k^{k/2} \> \leq \> k! \> \leq \> k^k . $$
     \item[(ii)] 
      $$ \frac{k^k}{e^{k-1}} \leq \> k! \> \leq \frac{k^{k+1}}{e^{k-1}} . $$
      \item[(iii)]
      $$ \left( \frac{k}{e} \right)^k \sqrt{2 \pi k} \> \leq \> k! \> \leq \> \left( \frac{k}{e} \right)^k \sqrt{2 \pi k} \cdot e^{1/12k} . $$
     \end{itemize}
     
     \end{lemma}
     \begin{proof}
     The right side of (i) is obvious and the left side is equivalent to $\sqrt{k} \leq (k!)^{1/k}$.
     These are the geometric means of $\{1,k\}$ and $\{1,2,...,k\}$ respectively.
     Since the product $j(k-j)$ is maximum where $j$ is close to $k/2$, then the first mean is at most the second, as desired.\\
    For (ii), let us notice that
    $$ \frac{k^k}{k!} = \prod_{j=1}^{k-1} \frac{(j+1)^j}{j^j} ,
    \hspace{1cm} \mbox{and} \hspace{1cm} 
    \frac{k!}{k^{k+1}} = \prod_{j=1}^{k-1} \frac{j^{j+1}}{(j+1)^{j+1}} . $$
    To obtain the left side, we use $\tfrac{j+1}{j} \leq e^{1/j}$, so that
    $$ \prod_{j=1}^{k-1} \frac{(j+1)^j}{j^j} \leq \prod_{j=1}^{k-1} e = e^{k-1} . $$
    To obtain the right side, we use $\tfrac{j}{j+1} \leq e^{-1/(j+1)}$, so that
    $$ \prod_{j=1}^{k-1} \frac{j^{j+1}}{(j+1)^{j+1}} \leq \prod_{j=1}^{k-1} e^{-1} = \frac{1}{e^{k-1}} . $$
    Finally, (iii) is the sharp estimate of Robbins \cite{1955Rob}.
     \end{proof}
    \vspace{0.1cm}
     
     As it is easy to see, taking logarithms in Lemma \ref{3lemFactorial} (i), we see that
     $$ \tfrac{1}{2} k \log k \> \leq \> \log(k!) \> \leq \> k \log k , $$
     and that
     $$ \tfrac{1}{2} \log k \> \leq \> \log \log (k!) \> \leq \> 2 \log k $$
     for all $k \geq 3$.

  \begin{lemma} \label{2lemDSDP}
    Let $G$ be an arbitrary group, $N \lhd G$, and $G/N \cong Q$.
    Then 
    $$ |Q/Q'| \> \leq \> |G/G'| \> \leq \> |Q/Q'| \cdot |N/N'| . $$
    \end{lemma}    
    \begin{proof}
    We have
    $$ |G/G'| = |G:NG'| \cdot |NG': G'| = |(G/N):(G/N)'| \cdot |N : N \cap G'| . $$
    Since $N' \subseteq N \cap G'$, the proof follows.
    \end{proof}
    \vspace{0.1cm}
  
   To prove Theorem \ref{thMain}, we seek for an absolute constant $C>0$ such that,
	for every sufficiently large $k$ and every $H \leqslant \Sym(k)$ of index at least $3$, one has
	\begin{equation} \label{3eqAbGSym}
	\log|H/H'| \> \leq \> C \cdot \frac{\log|\Sym(k):H|}{\log \log|\Sym(k):H|} .
	\end{equation}
	
  By $f(n) \ll g(n)$ and $g(n) \gg f(n)$
we mean the same thing, namely, that there is $C>0$ such that $f(n) \leq C \cdot g(n)$ for all $n \geq C$.
 Moreover, we will use frequently the fact that the function $x/\log x$ is an increasing function when $x \geq 3$.

 \begin{remark}[Small subgroups] \label{remSmall}
	 Choose a large $k$, and let $G:=\Sym(k)$.
	We notice that (\ref{3eqAbGSym}) is true for subgroups $H$ such that $\log|H| \leq \tfrac{1}{4} k \log k$.
In fact, via Lemma \ref{3lemFactorial} (i), we have
$$\log|G:H| \geq \log(k!) - \tfrac{1}{4} k \log k \geq \tfrac{1}{4} k \log k . $$
From the main theorem of \cite{1989KP} we have $|H/H'| \leq 3^{k/3}$, and so
$$ \log|H/H'| \leq 2k \leq
16 \cdot \frac{\tfrac{1}{4} k \log k}{\log(\tfrac{1}{4} k \log k)} \leq 16 \cdot \frac{\log|G:H|}{\log \log|G:H|} . $$
From the main theorem of \cite{1980PS}, a primitive subgroup of $G$ which does not contain $\Alt(k)$ has size at most $4^k$.
Since $\Alt(k)$ is perfect, primitive groups do not affect the proof of Theorem \ref{thMain}.
\end{remark}

\vspace{0.1cm}
   \section{Transitive subgroups}
   
     In short, looking inside $\Sym(k)$, we use two sharp inequalities with respect to $k$:
     one for the index of a maximal transitive group, and one for the abelianization of a transitive group,
     which is provided by \cite{2020LSS}.
     The structure of the groups we are left with is trapped by some smaller symmetric or alternating group,
     and this allows to argue that their abelian quotients are small.
     
   \begin{lemma} \label{3lemTransitiveSize}
     Let $W := \Sym(k/b)^b \rtimes \Sym(b) \leqslant G$, for some $2 \leq b \leq k/2$ which divides $k$.
     Then
     $$ \log|G:W| \> \geq \> \frac{k \log b}{3} $$
     if $k$ is sufficiently large.
     \end{lemma}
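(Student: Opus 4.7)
The plan is to compute $|W|$ exactly and then apply Lemma~\ref{3lemFactorial}(ii) factor by factor. Since $W$ stabilizes a partition of $\{1,\ldots,k\}$ into $b$ unordered blocks of size $k/b$, one has $|W| = ((k/b)!)^{b} \cdot b!$, hence
$$\log|G:W| = \log(k!) - b\log((k/b)!) - \log(b!).$$

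Next I would apply Lemma~\ref{3lemFactorial}(ii): the lower bound $\log(m!) \geq m\log m - (m-1)\log e$ on the numerator $k!$, and the upper bound $\log(m!) \leq (m+1)\log m - (m-1)\log e$ on the denominators $(k/b)!$ and $b!$. The pleasant surprise is that the four $\log e$ contributions combine as $-(k-1) + (k-b) + (b-1) = 0$ and cancel; what remains collapses after a short expansion to
$$\log|G:W| \geq (k-1)\log b - b\log k.$$

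It then remains to check that $(k-1)\log b - b\log k \geq \tfrac{k\log b}{3}$, equivalently $(2k-3)\log b \geq 3b\log k$, uniformly for $2 \leq b \leq k/2$ and $k$ large. The function $b \mapsto (2k-3)\log b - 3b\log k$ is concave in $b$, so its minimum on the closed interval $[2, k/2]$ is attained at an endpoint. At $b=2$ the required inequality is $(2k-3) \geq 6\log k$, trivial for large $k$; at $b = k/2$, substituting $\log b = \log k - 1$ and rearranging gives $(k-6)\log k \geq 4k - 6$, which likewise holds for $k$ large.

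The step I expect to require the most care is the upper-endpoint case $b = k/2$. This is the regime where $W$ is closest to $G$, the index $|G:W|$ is smallest, and the constant $1/3$ in the statement is essentially tight; here the choice of Lemma~\ref{3lemFactorial}(ii) rather than the cruder part (i) really matters, since (i) would already lose the coefficient of $\log b$ needed on the left-hand side.
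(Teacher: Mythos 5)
Your proposal is correct and follows essentially the same route as the paper: identify $|W| = ((k/b)!)^b\,b!$, apply Lemma~\ref{3lemFactorial}(ii) to $k!$, $(k/b)!$, and $b!$, observe the exponential corrections cancel, and land on the intermediate bound $\log|G:W|\ge (k-1)\log b - b\log k$, which is exactly the paper's intermediate inequality. The only divergence is the endgame: the paper rearranges to $\frac{(2/3)k-1}{\ln k}\ge \frac{b}{\ln b}$ and invokes monotonicity of $x/\ln x$ to reduce to the single case $b=k/2$ (plus a limit computation), whereas you rearrange to $(2k-3)\log b \ge 3b\log k$ and use concavity in $b$ to reduce to the two endpoints $b=2$ and $b=k/2$. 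Both finishes are sound; if anything, yours is a touch more careful, since $x/\ln x$ is actually decreasing on $(1,e)$ and so the paper's appeal to its monotonicity does not literally cover $b=2$ (though the conclusion is obviously true there for large $k$), while your concavity argument checks that endpoint explicitly. One trivial slip: you speak of ``four $\log e$ contributions,'' but there are only three ($-(k-1)$, $+(k-b)$, $+(b-1)$); the arithmetic and the cancellation are nevertheless correct.
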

     \begin{proof}
     We will prove the same inequality with the natural logarithm in place of $\log$.
     Taking the logarithm in Lemma \ref{3lemFactorial} (ii), we obtain
     $$ k \ln k - (k-1) \leq \ln(k!) \leq (k+1) \ln k -(k-1) . $$
     Using three times these inequalities, we can write
     $$ \ln|G:W| = \ln(k!) - \ln|W| = \ln(k!) -\ln(b!) -b \ln((k/b)!) \geq $$
  $$ k \ln k -(k-1) -(b+1) \ln b +(b-1) -b(k/b+1) \ln(k/b) +b(k/b-1) = $$
  $$ k \ln k -k -b \ln b -\ln b -k \ln(k/b) -b \ln(k/b) +k = $$
  $$ k \ln k -k -b \ln b -\ln b -k \ln k + k \ln b -b \ln k +b \ln b +k = $$
  $$ (k-1) \ln b -b \ln k . $$
  It rests to prove that this is at least $\tfrac{1}{3} k \ln b$, and arranging the terms we see that this is equivalent to
  $$ \frac{\tfrac{2}{3} k -1}{\ln k} \geq \frac{b}{\ln b} . $$
  Now we observe that, for large enough $k$ we have
  \begin{equation} \label{3eqTrans}
  \frac{\tfrac{2}{3} k -1}{\ln k} \geq \frac{(k/2)}{\ln (k/2)} ,
  \end{equation}
  because $\left( \tfrac{2}{3}k-1 \right)/(k/2)$ converges to $4/3$, while $\ln(k)/\ln(k/2)$ converges to $1$.
  Finally, for every $2 \leq b \leq k/2$, the right side of (\ref{3eqTrans}) is at least $b/\ln b$,
  because $x/\ln x$ is an increasing function.
     \end{proof}

   \begin{theorem}[Theorem 1 in \cite{2020LSS}] \label{3thLSS}
     If $G \leqslant \Sym(k)$ is a {\bfseries transitive} permutation group, then
     $$ |G/G'| \> \leq \> 4^{k/\sqrt{\log k}} . $$
     \end{theorem}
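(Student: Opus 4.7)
The plan is to argue by strong induction on $k$, splitting each inductive step into a primitive and an imprimitive case. In the primitive case, one of two things happens: either $G \geqslant \Alt(k)$, in which case $|G/G'| \leq 2$ and we are done; or $G$ is primitive but does not contain $\Alt(k)$, and the Aschbacher--Guralnick polynomial bound for abelian sections of primitive groups (as in \cite{AG}) gives $|G/G'| \leq k^c$ for an absolute constant $c$, which is vastly smaller than $4^{k/\sqrt{\log k}}$ once $k$ is large. So this case presents no real difficulty.

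The substantive case is therefore when $G$ is imprimitive. I would fix a non-trivial block system with $b$ blocks of common size $m = k/b$ and let $N \lhd G$ be the kernel of the action on blocks; Lemma \ref{2lemDSDP} then gives $|G/G'| \leq |(G/N)/(G/N)'| \cdot |N/N'|$. The quotient $G/N$ is a transitive subgroup of $\Sym(b)$, so induction provides $|(G/N)/(G/N)'| \leq 4^{b/\sqrt{\log b}}$. For $N$, the key observation is that $N$ embeds as a subdirect product of $b$ isomorphic copies of the transitive group $H \leqslant \Sym(m)$ that $N$ induces on a single block (the actions on different blocks are conjugate in $G$, hence isomorphic). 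Using that $N' \leq (H')^b$ together with the fact that $G/N$ permutes the factors transitively, one would derive $|N/N'| \leq |H/H'|^b$, and then induction on $m$ yields $|N/N'| \leq 4^{k/\sqrt{\log m}}$.

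The main obstacle is to combine these bounds into the target $4^{k/\sqrt{\log k}}$. Since $\log m < \log k$, the naive sum $\tfrac{k}{\sqrt{\log m}} + \tfrac{b}{\sqrt{\log b}}$ overshoots $\tfrac{k}{\sqrt{\log k}}$ by a multiplicative factor of roughly $\sqrt{\log k/\log m}$. To recover this loss I would choose the block system with care: taking the \emph{minimal} block system makes $G/N$ act primitively on $b$ points, which replaces $4^{b/\sqrt{\log b}}$ by a polynomial bound $b^{O(1)}$ that is absorbed cleanly. The genuinely delicate case is an iterated tower of block systems (for instance an iterated wreath product of many small primes, which seems to be the extremal configuration), where the estimate must be tracked level by level and the $\sqrt{\log k}$ in the denominator emerges only through the aggregate contribution of all the levels; I expect this level-by-level bookkeeping to be the central technical point.
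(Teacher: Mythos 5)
The paper does not prove Theorem \ref{3thLSS} at all: it is quoted verbatim as Theorem~1 of \cite{2020LSS} and used as a black box. So there is no ``paper's proof'' to compare against directly, but the paper's Proposition~\ref{3propLSSGen} and the surrounding discussion of $a(R)$ do reveal the shape of the argument in \cite{2020LSS}, and comparing against that exposes a genuine gap in your plan.

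Your primitive case is fine (Aschbacher--Guralnick gives a polynomial bound, which is negligible), and your overall inductive skeleton is reasonable. The problem is the step ``$N' \leq (H')^b$ together with the fact that $G/N$ permutes the factors transitively gives $|N/N'| \leq |H/H'|^b$.'' This inequality is false in general, even under the transitivity hypothesis. The containment $N' \leq N \cap (H')^b$ bounds $|N/N'|$ from \emph{below} by $|N/(N\cap (H')^b)|$, not from above; to get the upper bound you would need $N' \supseteq N \cap (H')^b$, which can fail. Concretely, take $H = \mathrm{SL}(2,5)$ (perfect, so $|H/H'|=1$) and
$$ N \> = \> \bigl\{ (g,h) \in H \times H \; : \; g \equiv h \pmod{Z(H)} \bigr\} , $$
the fibre product over the centre $Z(H) \cong C_2$. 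Then $N$ projects onto $H$ in both coordinates, $N$ is invariant under the coordinate swap (so one may take $G = N \rtimes C_2 \leq H \wr \Sym(2)$ with $G/N$ transitive on the two factors), yet $N = \Delta(H)\cdot(1\times Z(H))$ has $N' = \Delta(H)$ and $|N/N'| = 2 > 1 = |H/H'|^2$. The abelianization of a subdirect product can strictly exceed the product of the abelianizations when the factors have ``hidden'' abelian composition factors below the derived subgroup.

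This is exactly why \cite{2020LSS} works with the quantity $a(R)$, the logarithm of the product of \emph{all} abelian composition factors of $R$, rather than with $|R/R'|$: the invariant $a(\cdot)$ is subadditive across subdirect products and normal series in a way that $|\cdot/\cdot'|$ is not, and it is what makes the wreath-product induction close. You can see this reflected in Proposition~\ref{3propLSSGen} of the present paper, where the bound is $\log|G/G'| \leq (1+a(R))\,2k/\sqrt{\log k}$ rather than anything formulated in terms of $|R/R'|$. A secondary, smaller issue: the image of $N$ on a single block need not be transitive (for a regular $G$ and a block system coming from a non-normal subgroup with trivial core, $N=1$), so the induction hypothesis cannot always be applied to that image as stated; this requires a more careful formulation of the inductive statement, which the $a(R)$ framework also handles.
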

     \vspace{0.1cm}
     
     In reality, the proof of Theorem \ref{thMain} requires a slightly better result than Theorem \ref{3thLSS} itself.
     Given a finite group $R$ and a prime $p \geq 2$, let $a_p(R)$ be the number of the abelian composition factors of $R$ of order $p$.
     We define
     $$ a(R) \> := \> \sum_{p\,\mathrm{prime}} a_p(R) \cdot \log p . $$
	Informally, this is the logarithm of the ``abelian portion'' of $R$.
	We also introduce some more notation about wreath products.
	Let $W:=R^b \rtimes \Sym(b)$.
	We denote by $\rho_{\Sym(b)} : W \rightarrow \Sym(b)$ the projection over the top group,
	and for every $j =1,...,b$, we denote by $\prod_{j=1}^b R_j$ the base subgroup. For every $j$ we have
	$$ N_W(R_j) =R_j \times ( R^{b-1} \rtimes \Sym(b-1)) . $$
	This allows to consider the projections $\rho_j : N_W(R_j) \rightarrow R_j$.

     \begin{proposition} \label{3propLSSGen}
     Let $G \leqslant R \wr_k \Sym(k)$ such that $\rho(G)$ is transitive and
     $\rho_j(N_G(R_j))=R_j$ for every $j=1,..,k$.
     Then
     $$ \log|G/G'| \> \leq \> (1+a(R)) \frac{2k}{\sqrt{\log k}} . $$
     \end{proposition}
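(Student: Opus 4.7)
The plan is to separate the contributions of the transitive quotient $\rho(G)$ and the base $B := G \cap R^k$, which is normal in $G$ with $G/B \cong \rho(G)$. The short exact sequence $1 \to B/(B\cap G') \to G/G' \to \rho(G)/\rho(G)' \to 1$ yields
$$\log|G/G'| \ = \ \log|\rho(G)/\rho(G)'| + \log|B/(B\cap G')|.$$
Theorem~\ref{3thLSS} applied to $\rho(G)$ bounds the first term by $2k/\sqrt{\log k}$. Since $[B,G] \subseteq B\cap G'$, one has $\log|B/(B\cap G')| \leq \log|(B/B')_{\rho(G)}|$, the logarithm of the $\rho(G)$-coinvariants of the abelianization of $B$. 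Thus it suffices to prove $\log|(B/B')_{\rho(G)}| \leq a(R)$, since then $\log|G/G'| \leq 2k/\sqrt{\log k} + a(R) \leq (1+a(R))\cdot 2k/\sqrt{\log k}$, using $2k/\sqrt{\log k} \geq 1$ for $k \geq 3$.

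To bound these coinvariants, I would use the derived series $R \rhd R' \rhd R'' \rhd \cdots \rhd R^{(\infty)}$, where $R^{(\infty)}$ is the perfect core of $R$. Each power $(R^{(i)})^k$ is characteristic in $R\wr_k\Sym(k)$, hence $M_i := G \cap (R^{(i)})^k$ is normal in $G$, and the images $\overline{M}_i := M_i B'/B'$ form a descending chain of $\rho(G)$-submodules of $B/B'$. For each step with $R^{(i)} \neq R^{(\infty)}$, the quotient $\overline{M}_i/\overline{M}_{i+1}$ is a $\rho(G)$-subquotient of the abelian permutation module $A_i^k$ with $A_i := R^{(i)}/R^{(i+1)}$; since $\rho(G)$ acts transitively on $[k]$, the ambient module has coinvariants $A_i$ (via the sum map), and a short argument confirms that $\rho(G)$-subquotients inherit the bound $|\cdot| \leq |A_i|$. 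The right-exactness of coinvariants, iterated along the filtration, then gives $\log|(B/B')_{\rho(G)}| \leq \log|(\overline{M}_t)_{\rho(G)}| + a(R)$, using the telescoping identity $\sum_{i<t} \log|R^{(i)}/R^{(i+1)}| = \log|R/R^{(\infty)}| = a(R)$.

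It remains to show $\overline{M}_t = 1$, equivalently $M_t \leq B'$; since $R^{(\infty)}$ is perfect, this will follow once $M_t$ itself is shown to be perfect (as then $M_t = M_t' \leq B'$). This is the main obstacle: I would combine the hypothesis $\rho_j(N_G(R_j)) = R_j$ with the transitivity of $\rho(G)$ to force $M_t$ to project surjectively onto each coordinate $R^{(\infty)}_j$, via a power/averaging argument that converts the hypothesis-supplied stabilizer-of-$j$ elements of $G$ into elements of $M_t$ realizing every value of $R^{(\infty)}$ in position $j$. A Goursat-type analysis then shows that the resulting subdirect product in $(R^{(\infty)})^k$ is perfect, closing the argument.
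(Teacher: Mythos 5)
Your opening decomposition is fine and matches the spirit of the paper's argument: one separates the contribution of the top $\rho(G) \leqslant \Sym(k)$ (bounded by Theorem~\ref{3thLSS}) from the contribution of the base $B = G \cap R^k$. However, the heart of the proposition is precisely the bound on the base contribution, and this is where your argument breaks.

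The step that fails is the assertion that $\rho(G)$-subquotients of the permutation module $A_i^k$ inherit the bound $|{\cdot}_{\rho(G)}| \leq |A_i|$ from the ambient module. Coinvariants are not monotone under passing to submodules. Concretely, let $T := C_2 \times C_2 \leqslant \Sym(4)$ act regularly, let $A = \mathbb{F}_2$, and let $V \leqslant A^4 = \mathbb{F}_2[T]$ be the augmentation submodule. Since $\mathbb{F}_2[T] \cong \mathbb{F}_2[u,v]/(u^2,v^2)$ is a local ring whose radical $V$ satisfies $\dim V/V^2 = 2$, the coinvariants $V_T = V/V^2$ have order $4 > 2 = |A|$. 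This really does occur under the hypotheses of the proposition: take $R=C_2$, $k=4$, $G = V \rtimes T \leqslant C_2 \wr_4 \Sym(4)$. Then $\rho(G) = T$ is transitive, $\rho_j(N_G(R_j)) = R_j$ for all $j$ (since $V$ is a subdirect product in $C_2^4$ and $T$ is regular), $B = V$, and $|B/(B\cap G')| = |V/V^2| = 4$, so $\log|B/(B\cap G')| = 2 > 1 = a(R)$. Your claimed inequality $\log|B/(B\cap G')| \leq a(R)$ is therefore false, and with it the whole derivation collapses. This is not a minor loss: the correct bound on the base contribution, which the paper obtains by citing Lemma~3.1 of \cite{2020LSS}, is $(2/\sqrt{\pi})\,a(R)\,k/\sqrt{\log k}$, i.e.\ larger than your target by a factor of order $k/\sqrt{\log k}$, and the sharpness of the [2020LSS] bound shows this factor is genuine. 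That lemma is precisely the nontrivial content the paper imports, and it cannot be replaced by the elementary coinvariants argument you sketch.

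A secondary issue is the final step, where you want $M_t = G\cap (R^{(\infty)})^k$ to be perfect so that $\overline{M}_t = 1$. Subdirect products of perfect groups need not be perfect: with $R = R^{(\infty)} = \mathrm{SL}(2,5)$, $k=2$, and $H = \{(g,gz) : g\in \mathrm{SL}(2,5),\, z\in Z(\mathrm{SL}(2,5))\}$, the group $G = H\rtimes\langle\tau\rangle$ (with $\tau$ the coordinate swap) satisfies all the hypotheses, yet $M_t = H$ has $H/H' \cong C_2$. So the claim that the hypotheses force $M_t$ perfect is also incorrect as stated, though in this particular example the loss is absorbable. The decisive gap remains the coinvariants estimate above.
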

     \begin{proof}
     As an intermediate result towards Theorem \ref{3thLSS}, \cite[Lemma 3.1]{2020LSS} says that the inequality
     $$ \log |G/G'| \leq \frac{(2/\sqrt{\pi}) \cdot a(R) k}{\sqrt{\log k}} + \log |\rho(G)/(\rho(G))'| $$
     is true under our hypothesis. Since $\rho(G) \leqslant \Sym(k)$ is transitive, putting back Theorem \ref{3thLSS},
     and noting that $2/\sqrt{\pi} <2$, we obtain the claimed inequality.
     \end{proof}

     \begin{proof}[Proof of Theorem \ref{thMain} for transitive $H$]
We will always suppose that $k$ is larger than any constant.
	We have already settled primitive groups at the end of Section \ref{Sect2},
     so let $H \leqslant G$ be transitive but not primitive,
     and contained in a maximal transitive group $W:= \Sym(a)^b \rtimes \Sym(b)$ as in Lemma \ref{3lemTransitiveSize}.
     In particular, we choose $W$ in such a way that $a$ is the smallest possible
     (equivalently, the blocks of imprimitivity have minimal size).
     Then $\log|G:H| \geq \log|G:W| \geq \tfrac{1}{3} k \log b$.
    If $\log b \geq \sqrt{\log k}$, from Theorem \ref{3thLSS} we have
    $$ \log|H/H'| \leq \frac{2k}{\sqrt{\log k}} \ll \frac{\tfrac{1}{3} k \sqrt{\log k}}{\log(\tfrac{1}{3} k \sqrt{\log k})} \leq
    \frac{\tfrac{1}{3} k \log b}{\log(\tfrac{1}{3} k \log b)} \leq \frac{\log|G:H|}{\log\log|G:H|} , $$
 and then (\ref{3eqAbGSym}) is true.
Thus, it rests to control {\bfseries all} the cases
\begin{equation} \label{3eqSmallB}
1 \leq \log b \leq \sqrt{\log k} . 
\end{equation}
For such a fixed $b$, we take a closer look at $H \leqslant W$.
Let us recall the notation we introduced just before the statement of Proposition \ref{3propLSSGen}.
First, the projection over the top group $\rho_{\Sym(b)}(H) \leqslant \Sym(b)$ is transitive
(otherwise $H \leqslant \Sym(k)$ itself would be not transitive).
This implies that the projections $\rho_j$ of $N_H(\Sym(a))$ in $\Sym(a)$ are all isomorphic for every $1 \leq j \leq b$.
Let us denote by $H_{proj} \leqslant \Sym(a)$ one of these projections.
  Since the blocks of imprimitivity have minimal size by the construction of $W$,
  we get that $H_{proj} \leqslant \Sym(a)$ is primitive.
If $H_{proj}$ does not contain $\Alt(a)$,
then from the main theorem of \cite{1980PS} we have $|H_{proj}| \leq 4^a$.
By the imprimitive embedding theorem we have $H \leqslant (H_{proj})^b \rtimes \Sym(b)$, and so $|H| \leq 4^{ab} \cdot b!$.
Using also Lemma \ref{3lemFactorial} (i), and (\ref{3eqSmallB}), it follows that
$$ \log|G:H| \geq \tfrac{1}{2} k \log k -2k -b \log b \gg  k \log k - (\sqrt{\log k}) 2^{\sqrt{\log k}} \gg k \log k . $$
As we have seen in Remark \ref{remSmall}, this implies that $H$ is too much small.\\
We are left with the cases where either $H_{proj}=\Alt(a)$ or $H_{proj}=\Sym(a)$.
If $H_{proj} = \Alt(a)$, then from Proposition \ref{3propLSSGen} (notice that $a(R)=0$ in this case) we have
$$ \log|H/H'| \leq \frac{2b}{\sqrt{\log b}} . $$
Using again (\ref{3eqSmallB}), we obtain
$$ \log|H/H'| \leq \frac{2b}{\sqrt{\log b}} \ll \frac{2^{\sqrt{\log k}}}{(\log k)^{1/4}} \ll 
\frac{k}{\log k} \ll \frac{\log|G:W|}{\log\log|G:W|} \leq \frac{\log|G:H|}{\log\log|G:H|} . $$
If $H_{proj} = \Sym(a)$, then from Proposition \ref{3propLSSGen} (notice that $a(R)=1$ in this case) we have
$$ \log|H/H'| \leq \frac{4b}{\sqrt{\log b}} . $$
Using a last time (\ref{3eqSmallB}), we obtain
$ \log|H/H'| \ll \frac{\log|G:H|}{\log\log|G:H|} $ 
as before, and the proof of the transitive case is complete.
\end{proof}

   \vspace{0.1cm}
   \section{Intransitive subgroups}
   
   Let $H$ be contained in $\Sym(a) \times \Sym(b)$ for some $a+b=k$ and $1 \leq b \leq a \leq k-1$. 
   Consider the projection $\rho: H \rightarrow \Sym(b)$.
   The factorized subgroup $Ker(\rho) \times \rho(H) \leqslant \Sym(a) \times \Sym(b)$
   has the same size of $H$, and not smaller abelianization from Lemma \ref{2lemDSDP}.
   Thus, we can suppose $H=A \times B$ for some $A \leqslant \Sym(a)$ and $B \leqslant \Sym(b)$.
    We can also suppose that $a$, as $k$, is larger than any constant.
      We have
      $$ \log|H/H'| = \log|A/A'| + \log|B/B'| , $$
      and now we go by induction distinguishing two cases with respect to $b$.
      
      \subsection{Small $b$}
      First we suppose $b \leq M$, where $M>0$ is a large positive integer to be fixed later.
      If $a!/|A| < a$, then $|H/H'| \leq M!$, and (\ref{3eqAbGSym}) follows easily (with some $C$ depending on $M$).
      If $a!/|A| \geq a$, then
      by induction we have $\log|A/A'| \leq C \tfrac{\log(a!/|A|)}{\log \log(a!/|A|)}$.
      So, it is enough to prove that for some $C(M)$ and every $a!/|A| \geq a$ one has 
      \begin{equation} \label{eqC}
      C \frac{\log(a!/|A|)}{\log \log (a!/|A|)} + M! \> \leq \>
      C \frac{\log \left( \tfrac{a!}{|A|} \cdot \tfrac{(a+1) \cdot ... \cdot (a+b)}{b!} \right)}
      {\log \log \left( \tfrac{a!}{|A|} \cdot \tfrac{(a+1) \cdot ... \cdot (a+b)}{b!} \right)} 
	\end{equation}    
      for all $a$ large enough.
      Set $x:=a!/|A|$.
      We notice that, for all $a \geq 3$,
      $$ \tfrac{(a+1)\cdot ... \cdot (a+b)}{b!} = {a+b \choose a} \geq a \geq \frac{\log (a!)}{2 \log\log (a!)} \geq \frac{\log x}{2 \log \log x} . $$
      We can assume $x \geq a$. Now we compute the following limit.
      
      \begin{lemma} \label{lemmaLimit} 
      $$ \lim_{x \rightarrow +\infty} \> \left(
      \frac{\log \left( \frac{x \log x}{2 \log \log x} \right)}{\log \log \left( \frac{x \log x}{2 \log \log x} \right)}
      - \frac{\log x}{\log \log x} \right) \> = \> 1 .  $$
     \end{lemma}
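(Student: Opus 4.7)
The plan is to introduce the shorthand $A := \log x$ and $B := \log \log x$, set $y := \frac{x \log x}{2 \log \log x}$, and compute the two nested logarithms by direct expansion. Since $\log 2 = 1$ in base $2$, one immediately has
\[
\log y \;=\; \log x + \log\log x - 1 - \log\log\log x \;=\; A + B - 1 - \log B.
\]

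For the iterated logarithm I would factor $A$ out of the argument to get
\[
\log \log y \;=\; \log A + \log\!\left(1 + \frac{B - 1 - \log B}{A}\right) \;=\; B + O\!\left(\frac{B}{A}\right),
\]
using $\log A = B$ and $\log(1+t) = O(t)$ for small $t$ (the base-$2$ versus natural-log conversion only alters hidden constants). Now form the difference $\frac{\log y}{\log \log y} - \frac{A}{B}$ by bringing it to the common denominator $B \log \log y = B^2(1 + O(1/A))$. After cancelling the leading $AB$ terms and absorbing $A \cdot O(B/A) = O(B)$, the numerator
\[
(A + B - 1 - \log B)\,B \;-\; A\,\log \log y
\]
becomes $B^2 - B\log B + O(B)$, so
\[
\frac{\log y}{\log \log y} - \frac{A}{B} \;=\; 1 - \frac{\log B}{B} + O\!\left(\frac{1}{B}\right) + O\!\left(\frac{1}{A}\right) \;\longrightarrow\; 1
\]
as $x \to \infty$, since $A, B \to \infty$.

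The computation is elementary; the only care required is bookkeeping the error terms arising from the Taylor expansion of $\log(1+t)$ and from the base-$2$ versus natural-log conversion, and verifying that each is of strictly smaller order than the dominant $B^2$ appearing in both numerator and denominator. Convergence to $1$ is notably slow, governed by the term $(\log B)/B$, but this does not affect the value of the limit itself.
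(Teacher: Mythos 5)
Your proof is correct and takes essentially the same route as the paper: both expand $\log y = \log x + \log\log x - \log(2\log\log x)$ and rest on the observation that $\log\log y = \log\log x\,(1 + o(1))$, with the limiting value $1$ coming from the $\log\log x/\log\log y$ contribution. The only difference is organizational — you track the asymptotics over a common denominator with explicit $O(\cdot)$ error terms, whereas the paper splits the difference into three separate fractions and evaluates each limit; your version is slightly more self-contained in its bookkeeping.
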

      \begin{proof}
    We have
    $$ 
      \frac{\log \left( \frac{x \log x}{2 \log \log x} \right)}{\log \log \left( \frac{x \log x}{2 \log \log x} \right)}
      - \frac{\log x}{\log\log x} = $$
   $$  \left( \frac{\log x}{\log \log \left( \frac{x \log x}{2 \log \log x} \right)} - \frac{\log x}{\log\log x} \right) + 
      \frac{\log\log x}{\log \log \left( \frac{x \log x}{2 \log \log x} \right)} -
      \frac{\log(2\log\log x)}{\log \log \left( \frac{x \log x}{2 \log \log x} \right)} . $$
      Then, for $x \rightarrow +\infty$, it is easy to see that the second term converges to $1$, while the third term converges to zero.
      For the first term, we have that this is equal to
      $$ \frac{\log x}{\log \log \left( \frac{x \log x}{2 \log \log x} \right)} 
      \left( 1- \frac{\log \log \left( \frac{x \log x}{2 \log \log x} \right)}{\log \log x} \right) 
      \rightarrow 0 . \qedhere $$
    \end{proof}
     \vspace{0.1cm}
      
      From Lemma \ref{lemmaLimit}, we have that 
      $$ \frac{\log \left( \tfrac{a!}{|A|} \cdot \tfrac{(a+1) \cdot ... \cdot (a+b)}{b!} \right)}
      {\log \log \left( \tfrac{a!}{|A|} \cdot \tfrac{(a+1) \cdot ... \cdot (a+b)}{b!} \right)}  
      - \frac{\log(a!/|A|)}{\log \log (a!/|A|)} \geq 1/2 $$
     is true for every $a$ large enough.
      Thus, for $C:=2 M!$, we obtain that (\ref{eqC}) is true for every $a$ large enough, as desired.\\

     \subsection{Large $b$}
      Let us suppose $a \geq b > M$, where $M>0$ is again a large positive integer to be fixed later.
      By induction, we have
      $$ \log|A/A'| + \log|B/B'| \> \leq \>
      \frac{\log(a!/|A|)}{\log \log (a!/|A|)} + \frac{\log(b!/|B|)}{\log \log (b!/|B|)} . $$
 	Let $x:=a!/|A|$ and $y:=b!/|B|$.
 	Since $|G:H|=\tfrac{(a+b)!}{|A||B|}= xy{a+b \choose a}$, we need to prove that
 	\begin{equation} \label{eqDOIT}
 	\frac{\log x}{\log \log x} + \frac{\log y}{\log \log y}
      \> \leq \>
      \frac{\log (xy {a+b \choose a})}{\log \log (xy {a+b \choose a})}
 	\end{equation}
 	is true for every $a \geq b >M$, $a \leq x \leq a!$, $b \leq y \leq b!$.\\
    
    \begin{lemma}
    Let $X,Y,K$ be positive integers larger than $2$.
    If 
    $$ \frac{\log X}{\log \log X} + \frac{\log Y}{\log \log Y}
      \> \leq \>
     \frac{\log (XY \cdot K)}{\log\log(XY \cdot K)} , $$
     then 
       $$ \frac{\log x}{\log \log x} + \frac{\log y}{\log \log y}
      \> \leq \>
     \frac{\log (xy \cdot K)}{\log\log(xy \cdot K)} $$
     for every $3 \leq x \leq X$ and $3 \leq y \leq Y$.
    \end{lemma}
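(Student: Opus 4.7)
The plan is to read the lemma as a monotonicity statement. Define
$$\Psi(s,t)\;:=\;\frac{\log(stK)}{\log\log(stK)}\;-\;\frac{\log s}{\log\log s}\;-\;\frac{\log t}{\log\log t},$$
so that the hypothesis reads $\Psi(X,Y)\geq 0$ and the conclusion reads $\Psi(x,y)\geq 0$. It suffices to prove that $s\mapsto\Psi(s,t)$ is non-increasing (with $t,K$ fixed) and, symmetrically, that $t\mapsto\Psi(s,t)$ is non-increasing. The two-step reduction along $(X,Y)\to(x,Y)\to(x,y)$ then gives $\Psi(x,y)\geq\Psi(X,Y)\geq 0$.

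To establish monotonicity in $s$, I would treat $s$ as a continuous variable and differentiate:
$$\partial_s\Psi\;=\;tK\cdot f'(stK)\;-\;f'(s),\qquad f(u):=\frac{\log u}{\log\log u}.$$
A short calculation gives $uf'(u)=c\,(\log\log u-c)/(\log\log u)^{2}$ for an absolute constant $c>0$ depending only on the log base. Setting $h_c(v):=(v-c)/v^{2}$, the inequality $\partial_s\Psi\leq 0$ is equivalent to
$$h_c(\log\log(stK))\;\leq\;h_c(\log\log s).$$
Now $h_c'(v)=(2c-v)/v^{3}<0$ for $v>2c$, so $h_c$ is decreasing on $(2c,\infty)$; combined with $\log\log(stK)\geq\log\log s$, this yields what we want whenever $\log\log s\geq 2c$. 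The same calculation in the other variable gives non-increasingness in $t$ under the symmetric condition.

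The main obstacle is the small-$s$ (or small-$t$) regime in which $\log\log s<2c$, where $h_c$ fails to be monotone in the useful direction and the argument above breaks down. My plan is to pass to a discrete version: since $s,t,K$ are integers with $K\geq 3$, the multiplicative step from $s-1$ to $s$ matches the multiplicative step from $(s-1)tK$ to $stK$, so comparing $f(s)-f(s-1)$ with $f(stK)-f((s-1)tK)$ reduces the question to a log-concavity property of $f$ past a threshold $u_0$, which can be checked by direct computation; the remaining finitely many small pairs $(s,t)$ below the thresholds are verified by hand. In the intended application of the lemma (toward~(\ref{eqDOIT})), the lemma is used with $s\geq a$ and $t\geq b$ for $a,b>100$, so only the easy large-argument regime is really needed and no nontrivial finite checking arises.
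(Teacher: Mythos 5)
Your continuous-derivative calculation is sound and in fact more careful than the paper's. The identity $uf'(u)=c(\log\log u-c)/(\log\log u)^2$ with $c=\log_2 e$ is correct, as is the equivalence $\partial_s\Psi\le 0\iff h_c(\log\log(stK))\le h_c(\log\log s)$, and you correctly isolate the threshold $\log\log s\ge 2c$, equivalently $\log_2 s\ge e^2$, i.e.\ $s\gtrsim 168$, beyond which $h_c$ is decreasing and the step closes. The paper takes the same monotonicity route but works in the logarithmic variables $u=\log_2 s$, $v=\log_2 t$, $w=\log_2 K$ with $g(u)=u/\ln u$, and asserts $\partial_u f=g'(u+v+w)-g'(u)\le -\tfrac{1}{\ln u}+\tfrac{1}{(\ln u)^2}+\tfrac{1}{\ln(u+v+w)}\le 0$; the last ``$\le 0$'' is false when $\ln u<1$ (the displayed quantity is then strictly positive), and even fails for $u>e^2$ when $v,w$ are small, because the bound $\tfrac{z-1}{z^2}\le\tfrac{1}{z}$ throws away too much. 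The mechanism that actually works is the one you identify: $g'$ is decreasing on $(e^2,\infty)$, so $g'(u+v+w)<g'(u)$ once $u>e^2$. Your diagnosis of the gap is therefore accurate, and the paper's own proof has the same gap without acknowledging it.

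The proposed patch for small arguments, however, cannot succeed, because the lemma as stated (for all $3\le x\le X$, $3\le y\le Y$) is actually false there. Taking $X=100$, $Y=3$, $K=10^4$ (logs base $2$), one has $\tfrac{\log X}{\log\log X}+\tfrac{\log Y}{\log\log Y}\approx 4.817\le 4.860\approx\tfrac{\log(XYK)}{\log\log(XYK)}$, so the hypothesis holds, yet at $x=4$, $y=3$ one finds $\tfrac{\log x}{\log\log x}+\tfrac{\log y}{\log\log y}=2+2.385\approx 4.385>4.139\approx\tfrac{\log(xyK)}{\log\log(xyK)}$, so the conclusion fails. No finite check, by discrete log-concavity or otherwise, can rescue the full range $x,y\ge 3$. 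Your closing remark is the correct resolution: in the application the lemma is only invoked with $x\ge a$, $y\ge b$, and $a,b$ above the free constant $M$, so the lemma should be stated and proved with the lower bound $x,y\ge M$ for $M$ a sufficiently large absolute constant (e.g.\ $M\ge 2^{e^2}\approx 168$), in which range your $h_c$ argument, or equivalently the decrease of $g'$ on $(e^2,\infty)$, gives the claimed monotonicity.
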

    \begin{proof}
    We will argue replacing $x,y,X,Y,K$ with their logarithms (to the base $2$). 
    Fix $K \geq \log 3$, and set
    $$ f(x,y) \> := \> 
    \frac{x+y+K}{\log(x+y+K)} -  \frac{x}{\log x} - \frac{y}{\log y} . $$
    We will prove that $f(x,y)$ is non-increasing in $x$ and $y$.
    To do this, we can replace $\log_2$ with $\ln$ in the definition of $f$.
    When considered in $(1,+\infty) \times (1,+\infty)$, $f(x,y)$ is an analytic function.
    Computing the partial derivative with respect to $x$, we obtain
    $$ \frac{\partial f}{\partial x} 
    \leq - \frac{1}{\ln x} +  \frac{1}{(\ln x)^2} + \frac{1}{\ln(x+y+K)} \leq 0 . $$
    Since the expression of $f$ is symmetric with respect to $x$ and $y$,
    we have $\frac{\partial f}{\partial y} \leq 0$ as before,
    and the proof follows.
    \end{proof}
 \vspace{0.1cm}
   
   From the previous lemma, it is enough to check (\ref{eqDOIT}) when $x=a!$ and $y=b!$.
   The next inequality is really about the inverse function of the gamma function,
   and concludes the proof of Theorem \ref{thMain}.
   
   	\begin{proposition}
   	There exists an absolute constant $M >0$ such that, whenever $M \leq b \leq a$, then
   	$$ \frac{\log (a!)}{\log \log (a!)} + \frac{\log (b!)}{\log \log (b!)}
      \> \leq \>
      \frac{\log ((a+b)!)}{\log \log ((a+b)!)} . $$
   	\end{proposition}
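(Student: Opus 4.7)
The plan is to introduce the auxiliary function
$$ g(k) \> := \> \frac{\log(k!)}{k\log\log(k!)} , $$
so that $F(k) := \log(k!)/\log\log(k!) = k\,g(k)$, and to reduce the proposition to the single statement that $g$ is increasing on $[M,\infty)$ for some absolute constant $M$. Once this is shown, the inequality follows at once: for any $a,b \geq M$,
$$ F(a)+F(b) \> = \> a\,g(a)+b\,g(b) \> \leq \> a\,g(a+b)+b\,g(a+b) \> = \> F(a+b) , $$
since $g(a), g(b) \leq g(a+b)$. Notice that the hypothesis $a \geq b$ is not even used here; the monotonicity of $g$ carries the whole argument.

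The content of the proof is therefore to establish the monotonicity of $g$. Using the sharp form of Stirling's formula (Lemma \ref{3lemFactorial}(iii)) one has
$$ \log(k!) \> = \> k\log k - k\log e + O(\log k), \qquad \log\log(k!) \> = \> \log k + \log\log k + O(1/\log k), $$
and hence, after dividing numerator and denominator and expanding,
$$ g(k) \> = \> 1 \> - \> \frac{\log e + \log\log k}{\log k} \> + \> O\!\left(\frac{(\log\log k)^{2}}{(\log k)^{2}}\right) . $$
The dominant term is strictly increasing in $k$ once $\log k > 1$, because the elementary function $u \mapsto (\log e + \log u)/u$ has derivative $-(\log u)/u^{2} < 0$ for $u>1$.

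The main obstacle is to check that the $O((\log\log k)^{2}/(\log k)^{2})$ correction does not overturn this monotonicity. I would handle this by passing to the continuous extension $\tilde g(t) := \log\Gamma(t+1)/(t\log\log\Gamma(t+1))$ and computing
$$ (\ln\tilde g)'(t) \> = \> \frac{L'(t)}{L(t)}\!\left(1-\frac{1}{\ln L(t)}\right) - \frac{1}{t}, \qquad L(t):=\log\Gamma(t+1). $$
Substituting Stirling's asymptotics $L'(t)=\log t + O(1/t)$ and $tL'(t)/L(t)=1+\log e/\log t + O(1/(\log t)^{2})$, and using the identity $\log e/\log t = 1/\ln t$, the right-hand side simplifies to
$$ \frac{1}{t}\!\left(\frac{1}{\ln t}-\frac{1}{\ln L(t)}\right) \> + \> O\!\left(\frac{1}{t(\log t)^{2}}\right) . $$
Since $L(t)>t$ forces $\ln L(t) > \ln t$, the leading quantity $(1/\ln t) - (1/\ln L(t)) = \ln(L(t)/t)/(\ln t\,\ln L(t))$ is positive of order $\log\log t/(\log t)^{2}$, which dominates the $O(1/(\log t)^{2})$ error by a factor of $\log\log t$. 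Hence $\tilde g'(t)>0$ for all $t$ past an absolute constant, and the proposition follows.
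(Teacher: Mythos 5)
Your proof is correct, but it takes a genuinely different route from the paper. You isolate the statement that $g(k) := \log(k!)/(k\,\log\log(k!))$ is eventually increasing, and observe that this implies superadditivity of $F(k) := k\,g(k)$ by the clean one-line argument $F(a)+F(b)=a\,g(a)+b\,g(b)\le (a+b)\,g(a+b)=F(a+b)$. This is a nice structural reduction that is not present in the paper. The paper instead substitutes Robbins' two-sided Stirling bound for $\ln(k!)$ into each of the three terms (using that $x/\ln(x/\ln 2)$ is increasing, so the upper bound applies to the $a!$ and $b!$ terms and the lower bound to the $(a+b)!$ term), and then verifies the resulting inequality in elementary functions of $a$ and $b$ by comparing leading asymptotic terms. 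In the end both arguments hinge on the same fact — roughly, that $(1+\ln\ln k)/\ln k$ is decreasing in $k$ for $k$ large — but your version makes this explicit by computing $(\ln\tilde g)'(t)$ and checking that the positive contribution $(1/\ln t - 1/\ln L(t)) \asymp (\log\log t)/(\log t)^2$ dominates the $O(1/(\log t)^2)$ error. A small advantage of your approach is that it correctly identifies the $\log\log$-order term as the one that carries the sign; the paper's stated reduced inequality $a(1-1/\ln a)+b(1-1/\ln b)\le(a+b)(1-1/\ln(a+b))$ only records the $1/\ln k$ correction, which is a lower-order effect, so your asymptotic bookkeeping is actually tighter. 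The cost is that you need the digamma expansion and a derivative computation for $\tilde g$, whereas the paper stays entirely within discrete estimates. Either way the result holds for all sufficiently large $b$, which is all that the main theorem requires.
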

 	\begin{proof}
 	Taking the natural logarithm in Lemma \ref{3lemFactorial} (iii), we obtain
 	  $$ k(\ln k -1) + \frac{\ln(2 \pi k)}{2} \leq \ln(k!) \leq k(\ln k -1) + \frac{\ln(2 \pi k)}{2} + \frac{1}{12k} . $$
 	  Then
 	$$ \frac{\log (a!)}{\log \log (a!)} + \frac{\log (b!)}{\log \log (b!)} = 
 	\frac{\ln (a!)}{\ln \left( \frac{\ln (a!)}{\ln 2} \right) } + \frac{\ln (b!)}{\ln \left( \frac{\ln (b!)}{\ln 2} \right) } \leq $$
 	$$ \frac{a(\ln a -1) + \frac{\ln(2 \pi a)}{2} + \frac{1}{12a}}{\ln \left( \tfrac{1}{\ln 2} (a(\ln a -1) + \frac{\ln(2 \pi a)}{2} + \frac{1}{12a}) \right)}
 	  + \frac{b(\ln b -1) + \frac{\ln(2 \pi b)}{2} + \frac{1}{12b}}{\ln \left( \tfrac{1}{\ln 2} ( b(\ln b -1) + \frac{\ln(2 \pi b)}{2} + \frac{1}{12b}) \right)} 
 	  \leq $$
 	  $$ \frac{(a+b)(\ln (a+b) -1) + \frac{\ln(2 \pi (a+b))}{2}}{\ln \left( \tfrac{1}{\ln 2} ((a+b)(\ln (a+b) -1) + \frac{\ln(2 \pi (a+b))}{2} ) \right)} \leq $$
 	$$ \frac{\ln ((a+b)!)}{\ln \left( \frac{\ln ((a+b)!)}{\ln 2} \right) } = \frac{\log ((a+b)!)}{\log \log ((a+b)!)} . $$
 	Indeed, the inequality in the middle is true for sufficiently large $a$ and $b$,
 	because comparing the leading terms in the asymptotic expansions of both sides we obtain
 	$$ a \left( 1-\frac{1}{\ln a} \right) + b \left( 1-\frac{1}{\ln b} \right) \leq 
 	(a+b) \left( 1-\frac{1}{\ln(a+b)} \right) . \qedhere $$
 	\end{proof}

    \vspace{0.1cm}
    \section{Arbitrary finite groups} \label{sectFinal}
    
   	From Lemma \ref{3lemFactorial} (i) we have
     $$ k = \frac{k \log k}{\log k} \geq \frac{\log(k!)}{2 \log \log(k!)} . $$
    Thus, for every $k$ which is a multiple of $3$,  $G:= \Sym(k)$, and $H \leqslant G$ an elementary abelian $3$-group of size $3^{k/3}$,
     we obtain
     \begin{equation} \label{eqSharp}
      |H/H'| = 2^{\tfrac{k \log 3}{3}} \geq 
     \exp \left( \frac{(\log3/6) \log|G|}{\log \log|G|} \right) \geq \exp \left( \frac{(\log3/6) \log |G:H|}{\log \log |G:H|} \right) . 
     \end{equation}
     This shows that Theorem \ref{thMain} is the best possible for the symmetric groups.
     When $G$ is an arbitrary finite group, we have the following.
    
    \begin{proposition} \label{propAbSect}
    Every finite group $G$ of size at least $3$ has an abelian section of size at least $|G|^{1/6(\log \log |G|)^2}$.
    \end{proposition}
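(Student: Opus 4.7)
The plan is to prove the proposition by analyzing the chief factors of $G$. Write $n := |G|$ and fix a chief series of $G$ with factors $V_1, \ldots, V_r$, each of the form $V_i \cong T_i^{m_i}$ for some simple $T_i$. Every abelian subgroup of any $V_i$ is an abelian section of $G$, so it will suffice to produce a large abelian subgroup of one of the $V_i$, or to combine abelian subgroups across several of them.

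For abelian $T_i \cong C_{p_i}$, the factor $V_i$ itself is an elementary abelian section of size $p_i^{m_i}$. For nonabelian simple $T_i$, I would invoke the fact, provable via the classification of finite simple groups, that every nonabelian finite simple group $T$ contains an abelian subgroup of order at least $|T|^{c/\log\log|T|}$ for some absolute constant $c > 0$: for alternating groups this is witnessed by $C_3^{\lfloor k/3\rfloor} \leq A_k$ (of order $3^{k/3}$); for classical groups of Lie type over $\mathbb{F}_q$, by diagonal or unipotent abelian subgroups of size polynomial in $|T|$; and the twenty-six sporadic groups are handled by a finite check. Taking a direct product of such subgroups across the $m_i$ factors of $V_i$ then gives an abelian subgroup of $V_i$ of order at least $|V_i|^{c/\log\log|V_i|}$.

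Thus each chief factor $V_i$ of size $s_i$ contributes an abelian section of $G$ of log-size at least $c\log s_i/\log\log s_i$ (with $c = 1$ in the abelian case). If some chief factor has size at least $n^{1/6(\log\log n)^2}$, the bound is immediate. Otherwise, many small chief factors must collectively account for $n$, and one must combine contributions from distinct chief factors. The natural vehicle is the Fitting subgroup $F(G)$: being nilpotent, $F(G)$ is the direct product of its Sylow subgroups $P_p$, so the log-size of the largest abelian section of $F(G)$ equals the sum over $p$ of the analogous quantities for each $P_p$. For a $p$-group of order $p^a$, the derived series yields an abelian section of order at least $p^{a/(1+\log_2 a)}$, since the derived length of such a $p$-group is at most $1 + \log_2 a$; summing over $p$ gives an abelian section of $F(G)$ of log-size at least $\log|F(G)|/(1+\log_2\log_2|F(G)|)$.

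The main obstacle will be showing that the contributions from the Fitting subgroup, from the composition factors lying above $F(G)$, and from any nonabelian simple sections combine to produce an abelian section of log-size at least $\log n/6(\log\log n)^2$. This requires a careful case split on the relative sizes of $|F(G)|$, of the various chief factors, and of the generalised Fitting quotient $F^*(G)/F(G)$; the slack between the simple-group bound $|T|^{\Omega(1/\log\log|T|)}$ and the weaker target $|G|^{1/(\log\log|G|)^2}$ provides the room needed to absorb the resulting losses and to close out the argument with the constant $1/6$ in the denominator.
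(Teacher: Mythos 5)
Your proposal takes a genuinely different route from the paper, but it contains a genuine gap at the crucial step, which you yourself flag without resolving.

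The paper's argument is short and linear: it first proves, via the derived series, that every nilpotent group $H$ has an abelian section of size at least $|H|^{1/\log\log|H|}$; it then cites Pyber's theorem that every $G$ has a \emph{solvable subgroup} $S$ with $|S| \geq |G|^{1/2\log\log|G|}$, and Heineken's theorem that every solvable $S$ has a \emph{nilpotent subgroup} $H$ with $|H| \geq |S|^{1/3}$. The chain $G \supseteq S \supseteq H$ lives inside a single subgroup lattice, so the inequalities simply compose. The whole point of routing through Pyber and Heineken is that it converts the question into finding an abelian section of one nilpotent subgroup, where the direct-product-of-Sylows structure makes the derived-series bound additive across primes.

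Your approach instead works with a chief series and tries to extract large abelian sections from individual chief factors (elementary abelian ones directly, nonabelian simple ones via CFSG) and from $F(G)$, and then to ``combine'' them. That last step is where the argument fails as written. Abelian sections of different chief factors $N_i/N_{i-1}$ live in different quotients of $G$ and do not assemble into a single abelian section of $G$: there is no direct product structure across the layers of a chief series, and a group can have every chief factor tiny (e.g.\ a $p$-group with all chief factors $C_p$, or a solvable group with $F(G)$ small relative to $|G|$) without any single factor or $F(G)$ carrying a $|G|^{\Omega(1/(\log\log|G|)^2)}$-sized abelian piece on its own. You acknowledge this with ``the main obstacle will be showing that the contributions \dots combine'' and ``this requires a careful case split,'' but that is precisely the content of the proposition, not a detail to be deferred. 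Without an additional structural input of the type Pyber's theorem provides --- something that collapses the problem into a single nilpotent or solvable subgroup rather than a sum of contributions from incommensurable sections --- the case split you gesture at does not close, and I don't see how to make it close without essentially rediscovering Pyber's result. The CFSG-based claim about abelian subgroups of nonabelian simple groups is plausible and would help with the semisimple layer, but it does not touch the combination problem.
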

    \begin{proof}
    It is well known that every group of size $p^n$ has derived length at most $\log n$.
    Using pigeonhole on the derived series, we see that such a group has an abelian section of size at least $p^{n/\log n}$.
    Thus, every nilpotent group $H$ of size $p_1^{n_1} \cdot ... \cdot p_k^{n_k}$ has an abelian section of size at least
    $p_1^{n_1/\log n_1} \cdot ... \cdot p_k^{n_k/\log n_k}$. Since $\log n_i \leq \log \log |H|$ for every $i=1,...,k$,
    it follows that this size is at least $|H|^{1/ \log \log |H|}$.
    Now, a result of Pyber \cite[Corollary 2.3 (a)]{1997Pyber}
    shows that every finite group $G$ has a solvable subgroup of size at least $|G|^{1/2 \log \log |G|}$.
    By another result of Heineken \cite[Corollary]{1991Heineken}, a finite solvable group $S$
    has a nilpotent subgroup of size at least $|S|^{1/3}$.
    Then, putting all together, an arbirary finite group $G$ has an abelian section of size at least
    $$ |H|^{1/\log\log|H|} \geq |S|^{1/3 \log \log |S|} \geq |G|^{1/6(\log \log|G|)(\log \log |S|)} \geq
    |G|^{1/3(\log \log |G|)^2} , $$
    where we used that $x^{1/2 \log \log x}$ is an increasing function when $x \geq 7$.
    An analysis of groups of small order concludes the proof.
    \end{proof}
    \vspace{0.1cm}
    
    Arguing as in (\ref{eqSharp}), Proposition \ref{propAbSect} shows that
    Theorem \ref{thMain} is not far from the best possible, for every family of arbitrarily large finite groups.
     It is an intriguing question whether the $2$ at the exponent in Proposition \ref{propAbSect} can be removed:
     in fact, we conjecture that this can be done.
    Finally, it is worth to notice that a positive answer to a question of Pyber \cite[Problem 14.76]{2018Kourovka}
     would imply such an improvement of Proposition \ref{propAbSect},
     showing again that Theorem \ref{thMain} is the best possible for every family of arbitrarily large finite groups.\\
     
     {\bfseries P.S.} The article \cite{2022Sab} provides the conjectured improvement to Proposition \ref{propAbSect}.

\vspace{0.3cm}

\end{document}